\documentclass[12pt]{extarticle}
\pretolerance10000
\usepackage{amsfonts,amsmath}
\usepackage{amssymb}
\usepackage{latexsym}
\usepackage{epsfig}
\usepackage{graphicx,color,graphics}
\usepackage{color}
\setcounter{MaxMatrixCols}{10}

\setlength{\oddsidemargin}{-0.4432mm}
\setlength{\evensidemargin}{-0.04432mm}
\setlength{\topmargin}{-20mm} \setlength{\textheight}{23 cm}
\setlength{\textwidth}{16cm}

\newtheorem{lemma}{Lemma}[section]

\newtheorem{theorem}[lemma]{Theorem}

\newcommand{\N}{\ifmmode{{\Bbb N}}\else{\mbox{${\Bbb N}$}}\fi}
\newcommand{\R}{\ifmmode{{\Bbb R}}\else{\mbox{${\Bbb R}$}}\fi}

\begin{document}

\title{Exponential stability of the Euler-Bernoulli microbeam and thermal effect. }

\author{Roberto D\'{i}az\footnote{Departamento de Ciencias Exactas, Universidad de los Lagos, Osorno, Chile, (roberto.diaz@ulagos.cl)}\ , \ Octavio Vera\footnote{Departamento de Matem\'{a}tica, Universidad
del B\'{i}o-B\'{i}o, Concepci\'{o}n, Chile, (octaviovera49@gmail.com)} \ and \ Nicol\'{a}s Zumelzu \footnote{Departamento de F\'{i}sica y Matem\'{a}tica, Universidad de Magallanes, Punta Arenas,
Chile, (nicolas.zumelzu@umag.cl)}}

\date{}
\maketitle \pagestyle{plain} \thispagestyle{plain}

\begin{abstract}
The main goal in this work is to prove the exponential decay of the semigroup associated with a thermoelastic system composed of an Euler-Bernoulli type equation that models the transverse oscillation of a homogeneous microbeam with axial movement and in which a viscous damping is acting. In addition, to this microbeam has been endowed with a thermal effect given by the Coleman-Gurtin model which depends essentially on past history, representing an improvement of Fourier, Cattaneo and Green-Naghdi models. To achieve this goal, we will use mainly multiplicative techniques and standard tools of functional analysis.
\end{abstract}
\noindent{\it Keywords}: $C_{0}$-semigroup, functional analysis, exponential stability, system with memory.
\noindent{\it Mathematics Subject Classification:} 5B40, 5N05, 74K10.
\renewcommand{\theequation}{\thesection.\arabic{equation}}
\setcounter{equation}{0}
\section{Introduction}\label{Sec1}

\hspace{0.4cm} The use of electromechanical systems of small dimensions (dimensions of the order of micrometers or nanometers) has received in the last time an integral treatment within the scientific literature, this is due to its multiple and interesting applications in science and technology. On the other hand, due to the small dimensions of some of the pieces that make up these systems (or parts of them), they must ideally be modeled as microbeams. One of the main aspects to be modeled as beams or microbeams devices of these tiny systems, is to take into account the scale effects of the material and that these allow an analytical and mechanical treatment of high degree of significance for both engineering and industry. An important factor that can be highlighted from theories that take into account a minimum particle size, is that they can be applied without restrictions to structures of natural scale, when the materials that compose it are of high homogeneity.

Regarding the analysis of the oscillations in the beams, this has played a fundamental role in the technical literature of recent years. Namely one of the main problems of both the physical and mathematical point of view, is the question of the stabilization of the vibrations of a structure and as is known there are several types of stability but the most important of all is exponential stability. On the other hand, if a structure is given a heat effect, a sufficiently precise and realistic model is created from the physical point of view \cite{car,ame,riv}. At present moment, the effects of heat in a structure are mainly modeled by two non-classical modern perspectives that, at the limit case, involve the classical Fourier law of heat conduction, which has as its main deficiency the infinite propagation of heat in a body \cite{nun}, but there are several models that improve this deficiency such as the Cattaneo's law or Green-Naghdi type III model, however non of them consider the effects of past history in the heat flow (see \cite{g,fava,ch} ), unlike the model Coleman-Gurtin, which is essentially based on the assumption that the heat pulse is influenced by the past history of the temperature gradient and eliminate the paradoxes and controversies that the previous models \cite{g}.

On the other hand, according to Newton's laws, the constitutive equation that models the transverse oscillations of a microbeam with constant axial movement and in which an external disturbing force acting on it, is given by the following relation \cite{chen,din}

\begin{equation}\label{a}
M_{xx}+\rho\,A\left( u_{tt}+2\,\kappa\, u_{xt}+\kappa^{2}\,u_{xx} \right)=F(x,t)
\end{equation}where $u(x,t)$ is the transverse displacement, $\rho$ is the material density of the medium, $\kappa$ is the axial speed acting on the beam, $M(x,t)$ is the bending moment, $A$ is cross-sectional area and $F(x,t)$  is the external force acting on it. According to the theory of beams of Euler-Bernoulli and when applied to the model \eqref{a} the equation of the transverse movement of the beam with axial movement $\kappa$ can be deduced and which is given by the equation:
\begin{equation}\label{a0}
u_{xxxx}+\frac{\rho\,A}{E\,I}\left( u_{tt}+2\,\kappa\, u_{xt}+\kappa^{2}\,u_{xx} \right)+\alpha_{T}\,\left[ M_{T}\right]_{xx}=\frac{1}{E\,I}\,F(x,t)
\end{equation}
In this case $E\,I$ is the flexural rigidity of the
beam, $M_{T}$ is the thermal moment, $\alpha_{T}$ is
the thermal expansion coefficient. \\

\noindent
It can be highlighted firstly that in the study of this type of problems it is ideally sought that the semigroup associated with these equations or systems coupled with some dissipative effect decay exponentially when time $t$ tends to infinity. In this sense we note that in general there are several contributions to the study of the asymptotic behavior of systems associated with thermoviscoelastic problems with memory which in essence physically allow resistance to the change of state of the system, namely \cite{gio,gio2,fen,pata,nun,jo}. In this direction the following works can be enunciated when the heat flow model has been modified or follows a non-classical heat law, whose results are closely related to the one presented to this paper:\\

\noindent
Abouregal et al.  \cite{a}  they study the following system provided with an initial condition of the sinusoidal type.
\begin{align}
\begin{aligned}
&u_{xxxx}+\frac{\rho\,A}{E\,I}\left( u_{tt}+2\,\kappa\, u_{xt}+\kappa^{2}\,u_{xx} \right)+\alpha_{T}\,\left[ M_{T}\right]_{xx}=\frac{1}{E\,I}\,F(x,t) \\
& K\,\left( \theta_{xx}+\theta_{zz}\right)=\left( 1+\tau_{0}\frac{\partial}{\partial t}\right)\left[\rho \,C_{E}\,\theta_{t}-\gamma T_{0}\,z\, u_{txx} \right] \label{a1}
\end{aligned}
\end{align}
and they show through the tools of the numerical analysis that under certain conditions of the variable $\theta$ and the axial constant $\kappa$ of the previous system, this decays exponentially as time increase. \\

\noindent
 D\'iaz et al. in \cite{rd} study a variation of the previous problem and using standard tools of semigroups of linear operators theory, prove the well-posedness of the problem and the exponential stability of the following system
\begin{align}
\begin{aligned}
& u_{tt} + \left(\, p(x)\,u_{xx}\,\right) _{xx} + 2\,g(x)\,u_{t} + 2\,\kappa\,u_{xt} -\kappa^{2} \,u_{xx} +\beta\,\theta_{x} = 0 \\
& \theta_{tt}+\theta_{t}-\kappa\theta_{xx}-\eta u_{xxt}-\xi\theta_{xxt}=0 \label{a11}
\end{aligned}
\end{align}

In this work we will study the equation \eqref{a11}$_{1}$ with a thermal effect acting on it, such a thermal effect is given by the model of Coleman-Gurtin \cite{col} , which considers the effects of past history and gives us a more efficient and realistic mathematical model from the physical point of view. Namely our system is composed of the following equations
\begin{align}
\label{101}& u_{tt} + \left(\, \textbf{p}(x)\,u_{xx}\,\right) _{xx} + 2\,\textbf{g}(x)\,u_{t} + 2\,\kappa\,u_{xt} -\kappa^{2} \,u_{xx} +\beta\,\theta_{x} = 0,\quad\text{in}\quad\varGamma
\\
& \theta_{t}-\frac{1-\lambda_{1}}{\lambda_{2}}\,\theta_{xx}-\frac{\lambda_{1}}{\lambda_{2}}\,\int^{\infty}_{0}\varphi(s)\,\theta_{xx}(t-s) \ ds+\beta\,u_{xt}=0, \quad\text{in}\quad\varGamma.\label{102}
\end{align}
Where $\varGamma=\Omega\times\mathbb{R}^{+}=(\,0\,,L\,)\times (\,0\,,+\infty\,)$  , $\textbf{p}(x)\in H^{2}(\Omega)$ which describes the non-homogeneity of the material, $\textbf{g}(x)\in L^{\infty}(\Omega)$ represents the force viscous damping \cite{herr}.  The constants $\beta\,,\kappa\,,\lambda_{2}$ are assumed to be strictly positive and  $\lambda_{1}\in(\,0\,,1\,)$ , the memory kernel $\varphi$ is considered to be regular and positive function tend to zero as time increases.\\

For the problem \eqref{101}--\eqref{102}, we consider the following boundary values
\begin{equation}
\begin{aligned}
\label{103}u(0,\,t)=u(L,\,t)=u_{x}(0,  t)=u_{x}(L,\,t)=0,\quad \theta(0,\,t)=\theta(L,\,t)=0\quad \forall t\geq 0
\end{aligned}
\end{equation}
and initial conditions
\begin{align}
\label{104} u(x,\,0)=u_{0}(x),\quad u_{t}(x,\,0)=u_{1}(x),\quad
\theta(x,\,t)\rvert_{t\leq 0}=\theta_{0}(x),\quad \forall x\in\Omega
\end{align}
Throughout this paper, we always assume that there exist real constants $\alpha_{i}\,(\,i=1,...,4\,)$  such that for $x\in[0\,,L]$ and that satisfy the following estimates.
\begin{align}
\begin{aligned}	
\label{105} & 0 <\alpha_{1}\leq \textbf{p}(x)\leq
\alpha_{2}, \\
& 0 <\alpha_{3}\leq \textbf{g}(x)\leq \alpha_{4}.
\end{aligned}	
\end{align}
\\
The main goals of this work are to establish the well-posedness of the problem \eqref{101} - \eqref{104} considering the smoothness assumptions of the functions $\varphi, \textbf{p}, \textbf{g} $ and prove that the energy of the system decays exponentially as time $t$ tends to infinity.\\

\noindent
This paper is organized as follows: Section 2, we consider the system \eqref{101}--\eqref{104} and we give some notations and contents needed to deploy our main results . In Section 3, well-posedness of the system is established. In Section 4, we will prove that energy of the system under consideration decay to zero when $t$ tends to infinity.

\section{Functional setting and notation}
\label{sec:1}
In this section, we provide the semigroup context and the main tools wich be used to obtain the main result  . We consider the Lebesgue and Sobolev spaces

\begin{equation*}
L^{p}(\Omega),\quad 1\leq p \leq \infty,\hspace{0,3cm} H_{0}^{1}(\Omega)\hspace{0,3cm} \text{and}\hspace{0,3cm} H_{0}^{2}(\Omega)
\end{equation*}
in the case $p=2$ we write $\lVert u \rVert$ instead of $\lVert u \rVert_{2}$ .

In order to write the system \eqref{101}--\eqref{104} as a Cauchy problem in a Hilbert space, we introduce the new variable in the form proposed by \cite{gio,fen}
\begin{equation}\label{106}
\eta^{t}(x,s)=\eta(s)=\int^{s}_{0}\theta(x,\,t-\tau)\ d\tau,\quad (x\,,s)\in\Omega\times\mathbb{R}^{+}\,,t\geq 0
\end{equation}
Note that \eqref{106} satisfies the equation
\begin{equation*}
\eta_{t}(x,s)+\eta_{s}(x,s)=\theta(x,t) ,\quad(x\,,s)\in\Omega\times\mathbb{R}^{+},\,t\geq 0
\end{equation*}
and
\begin{equation*}
\eta^{t}(0)=0, \quad \text{in} \quad \mathbb{R}^{+},\,t\geq 0
\end{equation*}
also
\begin{equation*}
\eta^{0}(x,s)=\int^{s}_{0}\theta(x,-\tau)\ d \tau, \quad  (x\,,s)\in\Omega\times\mathbb{R}^{+}
\end{equation*}

Now substituting in the original system the variables $(u\,,v\,,\theta\,,\eta)$ where $v=u_{t}$ and we consider $\mu(s)=-\frac{\lambda_{1}}{\lambda_{2}}\,\varphi'(s)\,, l=\frac{1-\lambda_{1}}{\lambda_{2}}>0$ we see that \eqref{101}--\eqref{104} be yields the following equivalent system
\begin{align}
& v_{t}+\left( \,\textbf{p}(x)\,u_{xx}\,\right)_{xx}+2\,\textbf{g}(x)\,v+2\,\kappa\, v_{x}-\kappa^{2}\,u_{xx}+\beta\,\theta_{x}=0, \quad\text{in}\quad\varGamma \label{107}\\
& \theta_{t}-l\,\theta_{xx}-\int^{\infty}_{0}\mu(s)\,\eta_{xx}(s) \ ds+\beta\,v_{x}=0, \quad\text{in}\quad\varGamma \label{108}\\
& \eta_{t}+\eta_{s}=\theta. \quad\text{in}\quad\varGamma\times\mathbb{R}^{+}.\label{109}
\end{align}
Thus, the boundary conditions become
\begin{align}
\begin{aligned}	
&\label{110}u(0\,,t)=u(L\,,t)=u_{x}(0,  t)=u_{x}(L\,,t)=0,\quad \theta(0\,,t)=\theta(L\,,t)=0,\\
& \eta(0\,,s)=\eta(L\,,s)=0, \quad t\geq 0\,, s>0.
\end{aligned}	
\end{align}

\noindent
and the initial conditions are given by
\begin{align}
\begin{aligned}	
& u(x\,,0)=u_{0}(x),\quad u_{t}(x\,,0)=u_{1}(x)=v_{0}(x),\quad
\theta(x\,,0)=\theta_{0}(x)\\
& \eta^{0}(x\,,s)=\eta_{0}(x\,,s),\quad  (x\,,s)\in\Omega\times\mathbb{R}^{+}. \label{111}
\end{aligned}
\end{align}
\\
The kernel within \eqref{108} satisfies the following hypothesis
\begin{align}
&\mathbb{H}_{1}:\mu(s)\in C^{1}\left( \mathbb{R}^{+}\right)\cap L^{1}\left( \mathbb{R}^{+}\right),\hspace{0,1cm}\mu(s)\geq 0, \label{b}\\
& \mathbb{H}_{2}: \mu'(s) \leq 0 \hspace{0,2cm} \text{on}\hspace{0,2cm}\mathbb{R}^{+},\label{c}\\
& \mathbb{H}_{3}:\mu_{0}=\int^{\infty}_{0}\mu(s) \ ds > 0, \label{d}\\
& \mathbb{H}_{4}: \exists \, \delta_{1}>0, \hspace{0,1cm} \text{such that }\hspace{0,1cm}
\mu'(s)+\delta_{1}\,\mu(s) \leq 0.\label{f}
\end{align}

In view of  $\mathbb{H}_{1}-\mathbb{H}_{4}$,  let's assume that the $\eta$ variable belongs to the weighted Sobolev space with given by:
\begin{align*}
{\cal M}=L_{\mu}^{2}\left( \mathbb{R}^{+}\,, H_{0}^{1}(\Omega)\right)=\left\lbrace\, \eta:\mathbb{R}^{+}\rightarrow H_{0}^{1}(\Omega):\int_{0}^{\infty}\mu(s)\,\Arrowvert\eta_{x}(s)\Arrowvert^{2}<\infty \right\rbrace \\
\end{align*}
and the norm of this space is given :
\begin{equation*}
\left\|\eta\right\|_{{\cal M}}^{2} =\int_{0}^{\infty}\mu(s)\Arrowvert\eta_{x}\Arrowvert^{2}\ ds
\end{equation*}
Now, we define the following phase space
\begin{equation*}
\mathcal{H}= H_{0}^{2}\times
L^{2}\times
L^{2}\times
{\cal M}
\end{equation*}
The inner product on ${\cal H}$ is given by
\begin{align}
\langle\, \Phi,\Phi_{1}\,\rangle_{\mathcal{ H}}  = &  \int_{\Omega}\textbf{p}(x)\,u_{xx}\,\overline{u}_{1xx}\ dx+ \kappa^{2}\int_{\Omega} u_{x}\,\overline{u}_{1x}\ dx+\int_{\Omega} v\,\overline{v}_{1}\ dx+\int_{\Omega}\theta\,\overline{\theta}_{1}\ dx  \nonumber \\
&+\int_{\Omega}\int^{\infty}_{0} \mu(s)\, \eta_{x}\,\overline{\eta}_{1x}\ ds \ dx \label{113}
\end{align}
where $\Phi(t)=\Phi=\left( u\,,v\,,\theta\,,\eta\right)^{\top} ,\ $
$\Phi_{1}(t)=\Phi_{1}=\left( u_{1}\,,v_{1}\,,\theta_{1}\,,\eta_{1}\right)^{\top} $.\\

\noindent
The space ${\cal H}$ is equipped with the induced norm
\begin{equation}\label{114}
\|\Phi\|_{\mathcal{H}}^{2}  = \left\| \sqrt{p(x)}\,u_{xx}\right\|^{2}+\kappa^{2}\,\left\| u_{x}\right\|^{2}+ \left\| v\right\|^{2}+\left\| \theta\right\|^{2}+\left\| \eta\right\|_{{\cal M}}^{2}
\end{equation}
\\
On the other hand the initial value problem \eqref{107}--\eqref{111} can be reduced to the following abstract initial value problem for a first order evolution equation
\begin{equation}\label{115}
\frac{d}{dt}\Phi(t)=\mathbb{A}\,\Phi(t),\qquad \Phi(0)=\Phi_{0}
\end{equation}
where
\begin{equation*}
\Phi_{0}=\left( u_{0}\,,v_{0}\,,\theta_{0}\,,\eta_{0}\right)^{\top}.
\end{equation*}
\\

\noindent
The linear operator   $\mathbb{A}:\textbf{D}(\mathbb{A})\subset
\mathcal{H}\rightarrow \mathcal{H}$ is given by
\begin{equation}\label{116}
\mathbb{A}\,\Phi =\left(
\begin{array}{c}
v \\
-\left( \, \textbf{p}(x)\,u_{xx}\,\right)_{xx} -2\,\textbf{g}(x)\,v-2\,\kappa\, v_{x}+\kappa^{2}\, u_{xx}-\beta\,\theta_{x} \\
l\,\theta_{xx}+\int^{\infty}_{0}\mu(s)\,\eta_{xx}(s) \ ds-\beta\,v_{x} \\
\theta-\eta_{s}
\end{array}
\right)
\end{equation}

\noindent
with domain
$\textbf{D}(\mathbb{A})$ of the operator $\mathbb{A}$ is defined by
\begin{align*}
\textbf{D}\left( \mathbb{A}\right) =\left\{\Phi\in {\cal H}\left|\begin{array}{l} \textbf{p}(x)\,u_{xx}-\kappa^{2}\,u\in H^{2}\,, v\in H_{0}^{2}\\
\theta\in H^{1}_{0}\,,\eta\in {\cal M},\,\eta_{s}\in {\cal M},\,\eta(0)=0\\
l\,\theta_{xx}+\int^{\infty}_{0}\mu(s)\,\eta_{xx}(s) \ ds \in L^{2}	
\end{array} \right. \right\}
\end{align*}
\section{Well posedness}
\label{sec:2}
\begin{theorem}
	The operator $\mathbb{A}$ generates a $C_{0}$-semigroup $ \textbf{B}(t)=e^{\mathbb{A}\,t} $ of contractions on the space $\mathcal{H}.$
\end{theorem}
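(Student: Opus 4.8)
The plan is to invoke the Lumer--Phillips theorem, so I must verify three things: that $\textbf{D}(\mathbb{A})$ is dense in $\mathcal{H}$, that $\mathbb{A}$ is dissipative, and that $I-\mathbb{A}$ maps $\textbf{D}(\mathbb{A})$ onto $\mathcal{H}$. Density is routine, since $\textbf{D}(\mathbb{A})$ contains the smooth, boundary-compatible elements which are dense in each factor of the product space $\mathcal{H}$.

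For dissipativity I would compute $\operatorname{Re}\langle\mathbb{A}\Phi,\Phi\rangle_{\mathcal{H}}$ directly from \eqref{113} and \eqref{116}. After integrating by parts in $x$ and using the boundary conditions \eqref{110}, the terms arising from the bending operator $(\textbf{p}\,u_{xx})_{xx}$ and from $\kappa^{2}u_{xx}$ pair with the corresponding inner-product terms to form complex-conjugate pairs and hence contribute nothing to the real part; the same happens with the convective term $2\kappa v_{x}$ (whose real part is a boundary integral of $|v|^{2}$ that vanishes) and with the skew coupling through $\beta$ between $v$ and $\theta$ and between $\theta$ and $\eta$. What survives is
\begin{equation*}
\operatorname{Re}\langle\mathbb{A}\Phi,\Phi\rangle_{\mathcal{H}} = -2\int_{\Omega}\textbf{g}(x)\,|v|^{2}\,dx - l\int_{\Omega}|\theta_{x}|^{2}\,dx + \frac{1}{2}\int_{\Omega}\int_{0}^{\infty}\mu'(s)\,|\eta_{x}|^{2}\,ds\,dx.
\end{equation*}
Here the memory term is obtained by integrating by parts in $s$ the contribution of $-\eta_{s}$, using $\eta(0)=0$ and the decay of $\mu$; it is nonpositive precisely because of hypothesis $\mathbb{H}_{2}$ ($\mu'\le 0$), while the first two terms are nonpositive by \eqref{105} and $l>0$. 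Hence $\operatorname{Re}\langle\mathbb{A}\Phi,\Phi\rangle_{\mathcal{H}}\le 0$.

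For maximality I would fix $F=(f_{1},f_{2},f_{3},f_{4})^{\top}\in\mathcal{H}$ and solve $(I-\mathbb{A})\Phi=F$. The first equation gives $v=u-f_{1}$, and the last equation becomes the transport problem $\eta_{s}+\eta=\theta+f_{4}$ with $\eta(0)=0$, which I solve explicitly as $\eta(s)=\int_{0}^{s}e^{\tau-s}\big(\theta+f_{4}(\tau)\big)\,d\tau$, expressing $\eta$ through $\theta$. Substituting these relations and using $\int_{0}^{\infty}\mu(s)\eta_{xx}(s)\,ds=\big(\int_{0}^{\infty}\mu(s)(1-e^{-s})\,ds\big)\theta_{xx}+\text{(known data)}$ reduces the problem to a coupled elliptic system for $(u,\theta)\in H_{0}^{2}(\Omega)\times H_{0}^{1}(\Omega)$. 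I would then cast this system in weak form and check that the associated sesquilinear form is continuous and coercive: the diagonal part controls the full $H_{0}^{2}\times H_{0}^{1}$ norm (using $\eqref{105}$ and $l>0$), while the off-diagonal couplings through $\beta$ and $2\kappa$ are skew and drop out of the real part, exactly as in the dissipativity computation. Lax--Milgram then yields a unique weak solution, and elliptic regularity promotes it to an element of $\textbf{D}(\mathbb{A})$, guaranteeing in particular $\eta,\eta_{s}\in\mathcal{M}$ and $l\,\theta_{xx}+\int_{0}^{\infty}\mu(s)\eta_{xx}(s)\,ds\in L^{2}$.

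The main obstacle is this last step: one must treat the nonlocal memory term carefully, both in reducing \eqref{107}--\eqref{109} to a genuine elliptic system (via the explicit resolution of the transport equation in $s$ and the resulting effective diffusion constant $l+\int_{0}^{\infty}\mu(s)(1-e^{-s})\,ds$) and in verifying that the reconstructed $\eta$ indeed lies in the weighted space $\mathcal{M}$ together with $\eta_{s}$, so that $\Phi\in\textbf{D}(\mathbb{A})$; bounding the data contributions involving $f_{4}$ as antilinear functionals on the test space requires the Cauchy--Schwarz inequality with the weight $\mu$. Once dissipativity and this range condition are established, the Lumer--Phillips theorem yields that $\mathbb{A}$ generates a $C_{0}$-semigroup of contractions $\textbf{B}(t)=e^{\mathbb{A}\,t}$ on $\mathcal{H}$.
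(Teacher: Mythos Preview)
Your proposal is correct and follows essentially the same route as the paper: Lumer--Phillips via dissipativity (yielding exactly the same identity for $\operatorname{Re}\langle\mathbb{A}\Phi,\Phi\rangle_{\mathcal{H}}$) and surjectivity of $I-\mathbb{A}$ by solving the transport equation in $s$ explicitly, reducing to an elliptic system for $(u,\theta)\in H_{0}^{2}\times H_{0}^{1}$ with effective diffusion $l+\int_{0}^{\infty}\mu(s)(1-e^{-s})\,ds$, and invoking Lax--Milgram together with elliptic regularity. The paper additionally displays the coercive quadratic form explicitly and cites \cite{gio2} for the fact that the data contribution $\int_{0}^{\infty}\mu(s)\int_{0}^{s}e^{\tau-s}\eta^{*}_{xx}\,d\tau\,ds$ lies in $H^{-1}$, which is precisely the point you flag as the ``main obstacle.''
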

\begin{proof}
	We will show that  $\mathbb{A}$ is a dissipative operator and the mapping $I-\mathbb{A}$ is surjective . Firstly we observe that $\overline{\textbf{D}(\mathbb{A})}={\cal H}$, by using  \eqref{113} and we have for any $\Phi\in \textbf{D}\left( \mathbb{A}\right)$ that:
	\begin{align}
	\langle\, \mathbb{A}\,\Phi,\,\Phi\,\rangle_{\mathcal{H}} \nonumber
	& =\int_{\Omega}\textbf{p}(x)\,v_{xx}\,\overline{u}_{xx} \ dx+\kappa^{2}\,\int_{\Omega}v_{x}\,\overline{u}_{x} \ dx +\int_{\Omega}\left[-\left( \, \textbf{p}(x)\,u_{xx}\,\right)_{xx} -2\,\textbf{g}(x)\,v  \right]\,\overline{v} \ dx\\
	& +\int_{\Omega}\left[ -2\,\kappa\, v_{x}+\kappa^{2}\, u_{xx}-\beta\,\theta_{x} \right]\,\overline{v} \ dx +\int_{\Omega}\left[l\,\theta_{xx}+ \int^{\infty}_{0}\mu(s)\,\eta_{xx}(s) \ ds-\beta\,v_{x}\right]\,\overline{\theta}\ dx \nonumber\\
	& +\int_{\Omega}\int^{\infty}_{0} \mu(s)\, \left(\theta-\eta_{s} \right)_{x} \,\overline{\eta}_{x}\ ds \ dx \label{117}
	\end{align}
	Integrating by parts in \eqref{117} and straightforward calculations we have
	\begin{align}
	\langle\, \mathbb{A}\,\Phi,\,\Phi\,\rangle_{\mathcal{H}} \nonumber
	& =2\,i\,Im\int_{\Omega}\textbf{p}(x)\,\overline{u}_{xx}\,v_{xx} \ dx+2\,i\,\kappa^{2}\,Im\int_{\Omega}\overline{u}_{x}\,v_{x} \ dx+2\,i\,\beta\,Im\int_{\Omega}\overline{v}_{x}\,\theta \ dx \nonumber\\
	&+2\,i\,Im\int_{\Omega}\int^{\infty}_{0} \mu(s)\, \theta_{x} \,\overline{\eta}_{x}\ ds \ dx-2 \int_{\Omega}\textbf{g}(x)\,|v|^{2}\ dx-l \int_{\Omega}|\theta_{x}|^{2}\ dx \nonumber\\ & +\frac{1}{2}\int_{0}^{+\infty}\mu'(s)\Arrowvert\eta_{x}\Arrowvert^{2}\ ds \label{117b}
	\end{align}
	Taking real parts in \eqref{117b} we obtain	
	\begin{align}
	Re\left\langle\,\mathbb{A}\,\Phi\,,\Phi
	\,\right\rangle_{\mathcal{H}}=-2 \int_{\Omega}\textbf{g}(x)\,|v|^{2}\ dx-l \int_{\Omega}|\theta_{x}|^{2}\ dx +\frac{1}{2}\int_{0}^{+\infty}\mu'(s)\Arrowvert\eta_{x}\Arrowvert^{2}\ ds\leq 0 \label{118}
	\end{align}
	thus $\mathbb{A}$ is a dissipative operator.
	\\
	
	On the other hand, we prove that the operator $I-\mathbb{A}:\textbf{D}\left( \mathbb{A}\right)\rightarrow {\mathcal H}$ is surjective, we will use similar ideas given in \cite{gior1,fen}. Let any $\Phi^{*}=\left( u^{*}\,,v^{*}\,,\theta^{*}\,,\eta^{*}\right)^{\top}\in{\cal{H}}$, and consider the equation
	\begin{equation*}
	\left( I-\mathbb{A}\right)\,\Phi=\Phi^{*}
	\end{equation*}
	which can be written as
	\begin{align}
	& u-v =u^{*},\label{119}\\
	& v+\left( \textbf{p}(x)\,u_{xx}\right)_{xx} +2\,\textbf{g}(x)\,v+2\,\kappa\, v_{x}-\kappa^{2}\,u_{xx}+\beta\,\theta_{x}=v^{*}\label{120}\\
	& \theta-l\,\theta_{xx}-\int^{\infty}_{0}\mu(s)\,\eta_{xx}(s) \ ds+\beta\,v_{x}=\theta^{*}\label{121}\\
	& \eta-\theta+\eta_{s}=\eta^{*}\label{122}
	\end{align}
	Integrating  \eqref{122} we obtain
	\begin{equation}\label{123}
	\eta(s)=\left( 1-e^{-s}\right)\theta+ \int_{0}^{s}e^{\tau-s}\,\eta^{*}\ d\tau
	\end{equation}
	Replacing \eqref{123} in \eqref{121} and \eqref{119} in \eqref{120}--\eqref{121} and considering that
	\begin{equation}\label{124}
	\vartheta=\int^{\infty}_{0}\mu(s)\int^{s}_{0}e^{\tau-s}\,\eta_{xx}^{*}\ d\tau \ ds
	\end{equation}
	belongs to $H^{-1}$ (see \cite{gio2}) the systems   \eqref{119}-- \eqref{122} becomes
	\begin{align}
	&\textbf{c}_{0}\,u+2\,\kappa\,u_{x}+\left( \textbf{p}(x)\,u_{xx}\right)_{xx}-\kappa^{2}\,u_{xx}+\beta\,\theta_{x}=\textbf{c}_{0}\,u^{*}+2\,\kappa\,u_{x}^{*}+v^{*}\label{125}  \\
	&\theta-(l+\textbf{c}_{1})\,\theta_{xx}+\beta\,u_{x}=\beta\,u_{x}^{*}+\theta^{*}+\vartheta \label{126}
	\end{align}
	where
	\begin{align*}
	& \textbf{c}_{0}=1+2\,\textbf{g}(x) \\
	& \textbf{c}_{1}=\int_{0}^{\infty}\mu(s)\,\left( 1-e^{-s}\right)\ ds
	\end{align*}
	namely the function $ \textbf{c}_{0} $ is bounded and positive by \eqref{105} and the constant $\textbf{c}_{1} $ is positive under $\mathbb{H}_{1}-\mathbb{H}_{3}$. Now consider the bilinear form $ {\mathcal J} \left (z,z \right) $ associated with \eqref{125} - \eqref {126} and which is given by:
	
	\begin{equation*}\label{127}
	{\mathcal J}(z\,,z)=\left\| \sqrt{\textbf{c}_{0}}\,z_{1}\right\|^{2}+\kappa^{2}\left\| z_{1x}\right\|^{2}+\left\| \sqrt{\textbf{p}(x)}\,z_{1xx}\right\|^{2}+\left\| z_{2}\right\|^{2}+(l+\textbf{c}_{1})\left\| z_{2x}\right\|^{2}\\
	\end{equation*}
	where $z=(z_{1},z_{2})\in H_{0}^{2}\times H_{0}^{1} $. It is not hard to see that ${\mathcal J}$ is linear, bounded and coercive. Therefore, by  Lax-Milgram's Theorem exists a unique solution $z=(z_{1},z_{2})=(u\,,\theta)\in H_{0}^{2}\times H_{0}^{1}$ for the problem \eqref{125}--\eqref{126}.\\
	
	\noindent	
	On other hand, from\eqref{119} we have that $v\in H_{0}^{2}$ and by  \eqref{123} is not hard to see that $\eta(0)=0$ and
	\begin{equation*}
	\int_{0}^{\infty}\mu(s)\left\| \eta_{x}\right\|^{2} \ ds \leq 2\,\mu_{0}\left\|\theta_{x} \right\|^{2}+ 2\left\|\eta^{*} \right\|_{{\cal M}}^{2}
	\end{equation*}
	Therefore $\eta\in{\cal M}$.
	\noindent	
	Additionally from \eqref{120}  we have
	\begin{equation*}
	\left( \textbf{p}(x)\,u_{xx}\right) _{xx}-\kappa^{2}\,u_{xx}\in L^{2}
	\end{equation*}
	thus, from the regularity theory for the linear elliptic equations we obtain
	\begin{equation*}
	\textbf{p}(x)\,u_{xx}-\kappa^{2}\,u\in H^{2}
	\end{equation*}
	and then $\Phi\in \textbf{D}(\mathbb{A})$ solve the problem
	\begin{equation*}
	\left( I-\mathbb{A}\right)\,\Phi=\Phi^{*}
	\end{equation*}
	therefore $ I-\mathbb{A}$ is a surjective operator. As a consequence of Lumer-Phillips theorem  $\mathbb{A}$ is the infinitesimal generator of a $C_{0}$-semigroup $ \textbf{B}(t)$ of contractions on the Hilbert space ${\cal H}$.
\end{proof}

From this theorem follows well-posedness for the abstract Cauchy problem \eqref{115} thanks to the semigroup theory of linear operators. In particular, the following theorem \cite{pazy} is obtained immediately.

\begin{theorem}
	For any $\Phi_{0}\in \textbf{D}(\mathbb{A})$. Then $\Phi(t)$ is a strong solution of \eqref{115} satisfying
	\begin{equation*}
	\Phi\in C\left( (0,\infty);\textbf{D}(\mathbb{A})\right)\cap C^{1}\left( (0,\infty);\mathcal{H}\right)
	\end{equation*}
\end{theorem}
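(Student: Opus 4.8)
The plan is to obtain this statement as a direct corollary of the previous theorem together with the classical regularity theory for $C_{0}$-semigroups (see \cite{pazy}). Since it has already been shown that $\mathbb{A}$ is the infinitesimal generator of the contraction semigroup $\mathbf{B}(t)=e^{\mathbb{A}t}$ on $\mathcal{H}$, the natural candidate for the strong solution is the orbit issued from the initial datum, and the entire argument reduces to invoking standard semigroup machinery applied to $\mathbb{A}$ and $\mathbf{B}(t)$.

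First I would set $\Phi(t)=\mathbf{B}(t)\Phi_{0}$. The generation theorem guarantees that whenever $\Phi_{0}\in\mathbf{D}(\mathbb{A})$ one has $\Phi(t)\in\mathbf{D}(\mathbb{A})$ for every $t\geq 0$, that the map $t\mapsto\Phi(t)$ is continuously differentiable from $[0,\infty)$ into $\mathcal{H}$, and that
\begin{equation*}
\frac{d}{dt}\Phi(t)=\mathbb{A}\,\mathbf{B}(t)\,\Phi_{0}=\mathbf{B}(t)\,\mathbb{A}\,\Phi_{0},\qquad \Phi(0)=\Phi_{0}.
\end{equation*}
This immediately yields $\Phi\in C^{1}\left((0,\infty);\mathcal{H}\right)$ and shows that $\Phi$ solves the abstract Cauchy problem \eqref{115}, so that $\Phi$ is indeed a strong solution.

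Next I would establish continuity of $\Phi$ into $\mathbf{D}(\mathbb{A})$ endowed with its graph norm. Since $\mathbb{A}\,\Phi_{0}\in\mathcal{H}$ and $\mathbf{B}(t)$ is strongly continuous, the map $t\mapsto\mathbf{B}(t)\,\mathbb{A}\,\Phi_{0}=\mathbb{A}\,\Phi(t)$ is continuous in $\mathcal{H}$; combining this with the continuity of $t\mapsto\Phi(t)$ gives continuity in the graph norm, that is, $\Phi\in C\left((0,\infty);\mathbf{D}(\mathbb{A})\right)$. For uniqueness I would invoke the dissipativity of $\mathbb{A}$ recorded in \eqref{118}: the difference $w$ of two strong solutions satisfies $w(0)=0$ and $\frac{d}{dt}\lVert w\rVert_{\mathcal{H}}^{2}=2\,Re\langle\mathbb{A}\,w,w\rangle_{\mathcal{H}}\leq 0$, forcing $w\equiv 0$.

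I do not anticipate a genuine obstacle in this proof, since every step is a direct application of the generation and regularity theorems for contraction semigroups. The only point deserving a line of care is the passage from strong continuity in $\mathcal{H}$ to continuity in the graph norm of $\mathbf{D}(\mathbb{A})$, which is resolved simply by applying the strongly continuous semigroup to the single fixed vector $\mathbb{A}\,\Phi_{0}$.
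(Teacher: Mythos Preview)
Your proposal is correct and follows exactly the route taken in the paper: the theorem is stated as an immediate consequence of the previous generation result together with the classical regularity theory for $C_{0}$-semigroups from \cite{pazy}, with no additional argument given. In fact you supply more detail than the paper, which simply remarks that the result ``is obtained immediately'' from semigroup theory.
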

\section*{Exponential stability}
\label{sec:3}
In this section, we focus on to prove the uniform exponential stability for  $\textbf{B}(t)=e^{\mathbb{A}\,t} $  semigroup associated to the operator $\mathbb{A}$ , given in the section 2. For this we will see that the energy of the system decays uniformly exponentially with time. The achievement of this objective will be based on the use of multiplicative techniques, which essentially consist of the following lemmas

\begin{lemma}\label{lem0}
	For every solution $\Phi(t)$ of the system \eqref{107}--\eqref{111} the total energy 	$\mathcal{E}:\mathbb{R}^{+}\rightarrow\mathbb{R}^{+}$ is given at time $t$ by
	\begin{equation}
	2\,{\cal{E}} = \int_{\Omega} \textbf{p}(x)\,|u_{xx}|^{2}\ dx+\kappa^{2}\int_{\Omega}\,|u_{x}|^{2}\ dx+ \int_{\Omega}|v|^{2} \ dx+ \int_{\Omega}|\theta|^{2} \ dx+\int_{0}^{+\infty}\mu(s)\Arrowvert\eta_{x}\Arrowvert^{2}\ ds
	\end{equation}
	and satisfies
	\begin{equation}
	\frac{d}{dt}\,{\cal{E}}(t)=-2 \int_{\Omega}\textbf{g}(x)|v|^{2}\ dx-l \int_{\Omega}|\theta_{x}|^{2}\ dx+\frac{1}{2}\int_{0}^{+\infty}\mu'(s)\Arrowvert\eta_{x}\Arrowvert^{2}\ ds \\
	\end{equation}	
\end{lemma}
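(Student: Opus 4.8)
The plan is to recognize that the energy functional is nothing other than one half of the squared graph norm on $\mathcal{H}$, and then to differentiate it along trajectories by means of the abstract evolution equation \eqref{115}, so that the entire computation collapses onto the dissipativity estimate already established in \eqref{117}--\eqref{118}.

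First I would observe that, comparing the stated expression for $2\,\mathcal{E}(t)$ with the induced norm \eqref{114} and with the definition $\|\eta\|_{\mathcal M}^{2}=\int_{0}^{\infty}\mu(s)\|\eta_{x}\|^{2}\,ds$, one has the identity $2\,\mathcal{E}(t)=\|\Phi(t)\|_{\mathcal H}^{2}=\langle\,\Phi(t),\Phi(t)\,\rangle_{\mathcal H}$, where $\Phi(t)=(u,v,\theta,\eta)^{\top}$ is the solution furnished by the generation theorem. Thus the first formula of the lemma is immediate, being merely the unfolding of the norm of $\mathcal{H}$.

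For the dissipation law I would take $\Phi_{0}\in\textbf{D}(\mathbb{A})$, so that by the regularity result stated above the trajectory $\Phi(t)$ is a strong solution and satisfies $\tfrac{d}{dt}\Phi(t)=\mathbb{A}\,\Phi(t)$ pointwise in $\mathcal{H}$. Since $\langle\,\Phi,\Phi\,\rangle_{\mathcal H}$ is real, differentiating gives
\[
\frac{d}{dt}\,\mathcal{E}(t)=\frac{1}{2}\,\frac{d}{dt}\,\langle\,\Phi,\Phi\,\rangle_{\mathcal H}=\mathrm{Re}\,\langle\,\Phi_{t},\Phi\,\rangle_{\mathcal H}=\mathrm{Re}\,\langle\,\mathbb{A}\,\Phi,\Phi\,\rangle_{\mathcal H}.
\]
The right-hand side was already evaluated in \eqref{118}, which yields exactly the asserted identity
\[
\frac{d}{dt}\,\mathcal{E}(t)=-2\int_{\Omega}\textbf{g}(x)\,|v|^{2}\,dx-l\int_{\Omega}|\theta_{x}|^{2}\,dx+\frac{1}{2}\int_{0}^{+\infty}\mu'(s)\,\|\eta_{x}\|^{2}\,ds.
\]
For general data $\Phi_{0}\in\mathcal{H}$ the formula is then recovered by the density of $\textbf{D}(\mathbb{A})$ in $\mathcal{H}$ together with the continuous dependence on the initial data provided by the contraction semigroup $\textbf{B}(t)$.

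The main obstacle — and the only step that is not routine — is the memory contribution, which is precisely the source of the term $\tfrac{1}{2}\int_{0}^{\infty}\mu'(s)\|\eta_{x}\|^{2}\,ds$. Should one prefer a direct multiplier proof (multiplying \eqref{107} by $\overline{v}$ and \eqref{108} by $\overline{\theta}$, integrating over $\Omega$, and adding), the delicate point is to write $\tfrac{d}{dt}\int_{0}^{\infty}\mu(s)\,\|\eta_{x}\|^{2}\,ds=2\,\mathrm{Re}\int_{0}^{\infty}\mu(s)\,\langle\,\eta_{xt},\eta_{x}\,\rangle\,ds$, replace $\eta_{t}=\theta-\eta_{s}$ from \eqref{109}, and integrate by parts in the variable $s$. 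Here the hypotheses $\mathbb{H}_{1}$--$\mathbb{H}_{2}$ and the boundary condition $\eta(0)=0$ (with $\mu\in L^{1}(\mathbb{R}^{+})$ forcing $\mu(s)\|\eta_{x}(s)\|^{2}\to 0$ as $s\to\infty$) are exactly what is needed to annihilate the boundary terms at $s=0$ and $s=\infty$ and to recover the $\mu'$ term with the correct sign, using the domain regularity $\eta,\eta_{s}\in\mathcal{M}$ in the weighted space. All remaining manipulations are the same integrations by parts that produced \eqref{117b}, so no further difficulty arises.
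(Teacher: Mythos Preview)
Your argument is correct. The paper's own proof is a single line: multiply \eqref{107} by $v$ and \eqref{108} by $\theta$, use the boundary conditions \eqref{110}, and integrate by parts. You instead package the same computation abstractly, observing that $2\,\mathcal{E}(t)=\|\Phi(t)\|_{\mathcal H}^{2}$ and that $\tfrac{d}{dt}\mathcal{E}(t)=\mathrm{Re}\,\langle\mathbb{A}\Phi,\Phi\rangle_{\mathcal H}$, so the dissipation identity is literally the already-computed \eqref{118}. This is not a genuinely different argument---the integrations by parts hidden in \eqref{117}--\eqref{117b} are exactly the ones the paper's multiplier proof would perform---but your version has the advantage of avoiding any repetition, and your added remarks on the memory term (the $s$-integration by parts using $\eta(0)=0$, $\mu\in L^{1}$, and $\eta,\eta_{s}\in\mathcal{M}$) make explicit the one step the paper leaves implicit.
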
	
\begin{proof}
	By multiplying the equations \eqref{107} and \eqref{108} by $v$ and $\theta$ , boundary conditions \eqref{110} and integrating by parts we follows the result.
\end{proof}	

\begin{lemma}\label{lem1}
	Let $\Phi(t)$ be solution of the problem \eqref{107}--\eqref{111}. Then the time derivative of the functional $F_{1}$, defined by
	\begin{equation}\label{128}
	F_{1}(t)=\int_{\Omega}u\,v\ dx+\int_{\Omega}\textbf{g}(x)\,u^{2}\ dx
	\end{equation}	
	satisfies the inequality
	\begin{equation}\label{129*}
	\frac{d}{dt}F_{1}(t) \leq  -\int_{\Omega}\textbf{p}(x)\,|u_{xx}|^{2}\ dx-\frac{\kappa^{2}}{4} \int_{\Omega}|u_{x}|^{2}\ dx+C_{\kappa}\int_{\Omega}|v|^{2}\ dx+\frac{\beta^{2}}{2\,\kappa^{2}}\int_{\Omega}|\theta|^{2}\ dx
	\end{equation}
	where $C_{\kappa}\geq 5$
\end{lemma}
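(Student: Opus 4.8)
The plan is to differentiate $F_{1}$ directly, substitute the evolution equation \eqref{107} for $v_{t}$, integrate by parts using the clamped boundary conditions \eqref{110}, and then absorb the two resulting cross terms by Young's inequality. First I would write
\[
\frac{d}{dt}F_{1}(t)=\int_{\Omega} u_{t}\,v\ dx+\int_{\Omega} u\,v_{t}\ dx+2\int_{\Omega}\textbf{g}(x)\,u\,u_{t}\ dx .
\]
Since $u_{t}=v$, the first integral is $\int_{\Omega}|v|^{2}\ dx$ and the last is $2\int_{\Omega}\textbf{g}(x)\,u\,v\ dx$. In the middle integral I would replace $v_{t}$ by the right-hand side of \eqref{107}, namely $-(\textbf{p}(x)u_{xx})_{xx}-2\textbf{g}(x)v-2\kappa v_{x}+\kappa^{2}u_{xx}-\beta\theta_{x}$.

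The key bookkeeping step is integration by parts. Integrating $-\int_{\Omega} u(\textbf{p}(x)u_{xx})_{xx}\ dx$ twice and using $u=u_{x}=0$ on $\partial\Omega$ gives $-\int_{\Omega}\textbf{p}(x)|u_{xx}|^{2}\ dx$; integrating $\kappa^{2}\int_{\Omega} u\,u_{xx}\ dx$ by parts gives $-\kappa^{2}\int_{\Omega}|u_{x}|^{2}\ dx$; and the damping contribution $-2\int_{\Omega}\textbf{g}(x)\,u\,v\ dx$ cancels exactly the $+2\int_{\Omega}\textbf{g}(x)\,u\,v\ dx$ produced above. This leaves the identity
\[
\frac{d}{dt}F_{1}(t)=\int_{\Omega}|v|^{2}\ dx-\int_{\Omega}\textbf{p}(x)|u_{xx}|^{2}\ dx-\kappa^{2}\int_{\Omega}|u_{x}|^{2}\ dx-2\kappa\int_{\Omega} u\,v_{x}\ dx-\beta\int_{\Omega} u\,\theta_{x}\ dx .
\]

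It then remains to control the two cross terms. For the gyroscopic term I would integrate by parts to get $-2\kappa\int_{\Omega} u\,v_{x}\ dx=2\kappa\int_{\Omega} u_{x}\,v\ dx$ and apply Young's inequality as $2\kappa\,u_{x}v\le\frac{\kappa^{2}}{4}|u_{x}|^{2}+4|v|^{2}$; for the thermal coupling I would integrate by parts to get $-\beta\int_{\Omega} u\,\theta_{x}\ dx=\beta\int_{\Omega} u_{x}\,\theta\ dx$ and use $\beta\,u_{x}\theta\le\frac{\kappa^{2}}{2}|u_{x}|^{2}+\frac{\beta^{2}}{2\kappa^{2}}|\theta|^{2}$. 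Substituting these, the positive $|u_{x}|^{2}$ contributions $\frac{\kappa^{2}}{4}+\frac{\kappa^{2}}{2}$ combine with the available $-\kappa^{2}$ to leave precisely $-\frac{\kappa^{2}}{4}\int_{\Omega}|u_{x}|^{2}\ dx$, the $|v|^{2}$ terms add to $(1+4)\int_{\Omega}|v|^{2}\ dx$, which yields the stated $C_{\kappa}\ge 5$, and the thermal term reproduces exactly $\frac{\beta^{2}}{2\kappa^{2}}\int_{\Omega}|\theta|^{2}\ dx$.

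The only delicate point, and the step I expect to require the most care, is the calibration of the Young parameters: they must be chosen so that the two cross terms are absorbed into the good negative term $-\kappa^{2}\int_{\Omega}|u_{x}|^{2}\ dx$ with exactly $-\frac{\kappa^{2}}{4}$ surviving, while the $\theta$-coefficient comes out to be precisely $\frac{\beta^{2}}{2\kappa^{2}}$. Everything else is routine integration by parts relying on the boundary conditions \eqref{110}.
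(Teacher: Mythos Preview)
Your proposal is correct and follows exactly the same route as the paper: differentiate $F_{1}$, substitute \eqref{107}, integrate by parts to reach the identity with the two cross terms $2\kappa\int_{\Omega}u_{x}v\,dx$ and $\beta\int_{\Omega}u_{x}\theta\,dx$, and then apply Young's inequality with the same parameter choices (yielding $\tfrac{\kappa^{2}}{4}+\tfrac{\kappa^{2}}{2}$ against the $-\kappa^{2}$ term and $1+4=5$ for the $|v|^{2}$ coefficient). If anything, your write-up is more explicit than the paper's, which hides the cancellation of the $\pm 2\int_{\Omega}\textbf{g}(x)\,u\,v\,dx$ terms under the word ``simplifying.''
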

\begin{proof}
	Differentiating \eqref{128} in $t$- variable, we follows:
	\begin{equation}\label{129}
	\frac{d}{dt}F_{1}(t)=\int_{\Omega}u\,v_{t}\ dx+\int_{\Omega}v^{2}\ dx+\,\frac{d}{dt}\int_{\Omega}\textbf{g}(x)\,u^{2}\ dx
	\end{equation}	
	Using 	\eqref{107} in \eqref{129} yields
	\begin{align}
	\frac{d}{dt}F_{1}(t) & =\int_{\Omega}u\,\left[ -\left( \, \textbf{p}(x)\,u_{xx}\,\right)_{xx} -2\,g(x)\,v-2\,\kappa\, v_{x}+\kappa^{2}\, u_{xx}-\beta\,\theta_{x} \right] \ dx \nonumber \\
	&+\int_{\Omega}v^{2}\ dx+\,\frac{d}{dt}\int_{\Omega}\textbf{g}(x)\,u^{2}\ dx \label{130}
	\end{align}
	Integrating by parts \eqref{130}, simplifying, follows
	\begin{align}
	\frac{d}{dt}F_{1}(t) =& -\int_{\Omega}\textbf{p}(x)\,u_{xx}^{2}\ dx+2\,\kappa\int_{\Omega}\,u_{x}\,v\ dx-\kappa^{2}\int_{\Omega}\,u_{x}^{2}\ dx+\beta\int_{\Omega}u_{x}\,\theta\ dx\nonumber\\
	&+\int_{\Omega}v^{2}\ dx \label{131}
	\end{align}
	by carefully using Young's inequality in \eqref{131} we get
	\begin{align*}
	\frac{d}{dt}F_{1}(t) =& -\int_{\Omega}\textbf{p}(x)\,|u_{xx}|^{2}\ dx+\frac{\kappa^{2}}{4}\int_{\Omega}|u_{x}|^{2}\ dx+4\int_{\Omega}|v|^{2}\ dx-\kappa^{2}\int_{\Omega}\,|u_{x}|^{2}\ dx\nonumber\\
	&+\frac{\kappa^{2}}{2}\int_{\Omega}|u_{x}|^{2}\ dx
	+\frac{\beta^{2}}{2\,\kappa^{2}}\int_{\Omega}|\theta|^{2}\ dx+\int_{\Omega}|v|^{2}\ dx 
	\end{align*}
	therefore	
	\begin{equation*}
	\frac{d}{dt}F_{1}(t) \leq  -\int_{\Omega}\textbf{p}(x)\,|u_{xx}|^{2}\ dx-\frac{\kappa^{2}}{4} \int_{\Omega}|u_{x}|^{2}\ dx+C_{\kappa}\int_{\Omega}|v|^{2}\ dx+\frac{\beta^{2}}{2\,\kappa^{2}}\int_{\Omega}|\theta|^{2}\ dx
	\end{equation*}
	where $C_{\kappa}\geq 5$, hence lemma is follows.
\end{proof}

\begin{lemma}\label{lem3}
	Let $\Phi(t)$ be solution of the problem \eqref{107}--\eqref{111}. Then the functional $I$, defined by
	\begin{equation}\label{133}
	I(t)=-\int_{0}^{\infty}\mu(s)\left( \int_{\Omega}\theta_{t}\,\eta\ dx\right)\ ds
	\end{equation}	
	satisfies the inequality
	\begin{equation}\label{133*}
	I(t)\leq C_{1}\int_{\Omega}|v|^{2}\ dx+ C_{2}\int_{\Omega}|\theta_{x}|^{2}\ dx -C_{3}\int_{0}^{\infty}\mu'(s) \left\| \eta_{x}\right\|^{2}  \ ds
	\end{equation}
	for positive constants:
	\begin{equation*}
	C_{1}=\frac{\beta\,\mu_{0}\,\sigma_{2}}{2},\quad C_{2}=\frac{l\,\mu_{0}\,\sigma_{1}}{2},\quad C_{3}=\frac{l}{2\,\sigma_{1}\,\delta_{1}}+\frac{\mu_{0}\,\beta}{2\,\sigma_{2}\,\delta_{1}}+\frac{l\,\mu_{0}}{\delta_{1}}
	\end{equation*}		
\end{lemma}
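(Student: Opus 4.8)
The plan is to estimate $I(t)$ directly by removing $\theta_{t}$ from the definition \eqref{133} through the evolution equation \eqref{108}. First I would substitute $\theta_{t}=l\,\theta_{xx}+\int_{0}^{\infty}\mu(\sigma)\,\eta_{xx}(\sigma)\,d\sigma-\beta\,v_{x}$ into \eqref{133}, which decomposes $I(t)$ into three integrals: a diffusion--memory term carrying the factor $l$, a purely memory--memory term, and a velocity--memory term carrying $\beta$. In each of them exactly one factor carries two spatial derivatives, so the natural next step is to integrate by parts once in $x$, shifting a derivative onto $\eta(s)$. The boundary conditions \eqref{110} make $\theta(\cdot,t)$, $v(\cdot,t)=u_{t}(\cdot,t)$ and $\eta(\cdot,s)$ vanish at $x=0,L$, so every boundary contribution drops and the three integrals become $l\int_{0}^{\infty}\mu(s)\int_{\Omega}\theta_{x}\,\eta_{x}(s)\,dx\,ds$, the quantity $\int_{\Omega}\bigl|\int_{0}^{\infty}\mu(s)\,\eta_{x}(s)\,ds\bigr|^{2}\,dx$, and $-\beta\int_{0}^{\infty}\mu(s)\int_{\Omega}v\,\eta_{x}(s)\,dx\,ds$.

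Next I would bound each piece by Cauchy--Schwarz followed by Young's inequality. In the diffusion--memory term, Young's inequality with weight $\sigma_{1}$ splits off a multiple of $\int_{0}^{\infty}\mu(s)\|\theta_{x}\|^{2}\,ds=\mu_{0}\,\|\theta_{x}\|^{2}$, which is the origin of $C_{2}$, and leaves a multiple of $\int_{0}^{\infty}\mu(s)\|\eta_{x}\|^{2}\,ds$. In the velocity--memory term, Young's inequality with weight $\sigma_{2}$ splits off a multiple of $\mu_{0}\,\|v\|^{2}$, the origin of $C_{1}$, and again a multiple of $\int_{0}^{\infty}\mu(s)\|\eta_{x}\|^{2}\,ds$. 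The memory--memory term is controlled by Cauchy--Schwarz in the history variable, namely $\bigl|\int_{0}^{\infty}\mu(s)\,\eta_{x}(s)\,ds\bigr|^{2}\le\mu_{0}\int_{0}^{\infty}\mu(s)\,|\eta_{x}(s)|^{2}\,ds$, so that it too is dominated by a multiple of $\int_{0}^{\infty}\mu(s)\|\eta_{x}\|^{2}\,ds$.

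Finally I would convert the three remaining history integrals into the dissipation-type term of the claim. Here hypothesis $\mathbb{H}_{4}$ is decisive: from $\mu'(s)+\delta_{1}\,\mu(s)\le 0$ one gets $\mu(s)\le-\tfrac{1}{\delta_{1}}\mu'(s)$, whence $\int_{0}^{\infty}\mu(s)\|\eta_{x}\|^{2}\,ds\le-\tfrac{1}{\delta_{1}}\int_{0}^{\infty}\mu'(s)\|\eta_{x}\|^{2}\,ds$. Applying this to each contribution and adding the coefficients produces a single term of the form $-C_{3}\int_{0}^{\infty}\mu'(s)\|\eta_{x}\|^{2}\,ds$, giving the inequality \eqref{133*} with positive constants $C_{1},C_{2},C_{3}$ built from $\beta,l,\mu_{0},\sigma_{1},\sigma_{2}$ and $\delta_{1}$.

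I expect the main obstacle to be the memory--memory term, the double integral in the history variable produced by pairing the convolution $\int_{0}^{\infty}\mu(\sigma)\,\eta_{xx}(\sigma)\,d\sigma$ against $\eta(s)$. Unlike the other two contributions it is genuinely nonlocal in $s$ and, even though it turns out to be nonnegative after integration by parts, the only way to control it by the weighted norm of $\eta_{x}$ is to recognise it as the square of the $\mu$-weighted average of $\eta_{x}$ and then apply Cauchy--Schwarz in the weighted history space. Tracking the $\mu_{0}$ factors correctly there, and ensuring that all three history contributions carry the right sign before $\mathbb{H}_{4}$ is invoked, is the delicate bookkeeping on which the clean form of $C_{3}$ depends.
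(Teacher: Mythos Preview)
Your proposal is correct and follows essentially the same route as the paper: substitute \eqref{108} for $\theta_{t}$, integrate by parts in $x$ to produce the three terms you list, apply Young's inequality with parameters $\sigma_{1},\sigma_{2}$ to the diffusion--memory and velocity--memory terms and Cauchy--Schwarz in the history variable to the memory--memory square, then invoke $\mathbb{H}_{4}$ to trade $\mu$ for $-\mu'/\delta_{1}$. Your sign on the memory--memory term (nonnegative after integration by parts) is in fact the correct one; the paper's display \eqref{134} carries a spurious minus sign and factor of $l$ on that term, but this does not affect the argument since the term is bounded above either way and only the positivity of $C_{3}$ matters downstream.
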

\begin{proof}
	In fact, by using \eqref{108} in \eqref{133} follows
	\begin{align}
	I(t)&=-\int_{0}^{\infty}\mu(s)\left( \int_{\Omega}\left[l\,\theta_{xx}+ \int^{\infty}_{0}\mu(s)\,\eta_{xx}(s) \ ds-\beta\,v_{x}\right]\,\eta\ dx\right)\ ds \nonumber\\
	&=-l\int_{0}^{\infty}\mu(s) \int_{\Omega}\theta_{xx}\,\eta\ dx\ ds+\beta\int_{0}^{\infty}\mu(s) \int_{\Omega}v_{x}\,\eta\ dx\ ds\nonumber\\
	&-l\int_{\Omega}\left|\int_{0}^{\infty}\mu(s)\, \eta_{x}\ ds\right|^{2} \ dx \label{134}
	\end{align}
	Integrating by parts and using $\mathbb{H}_{4}$ and  Young's inequality in \eqref{134} follows
	\begin{align*}
	I(t)&\leq \frac{l\,\mu_{0}\,\sigma_{1}}{2}\int_{\Omega}|\theta_{x}|^{2}\ dx-\frac{l}{2\,\sigma_{1}\,\delta_{1}}\int_{0}^{\infty}\mu'(s) \left\| \eta_{x}\right\|^{2}  \ ds+\frac{\beta\,\mu_{0}\,\sigma_{2}}{2}\int_{\Omega}|v|^{2}\ dx \nonumber\\
	&-\frac{\mu_{0}\,\beta}{2\,\sigma_{2}\,\delta_{1}}\int_{0}^{\infty}\mu'(s) \left\| \eta_{x}\right\|^{2}  \ ds -\frac{l\,\mu_{0}}{\delta_{1}}\int_{0}^{\infty}\mu'(s) \left\| \eta_{x}\right\|^{2}  \ ds \quad\mbox{for}\quad \sigma_{1}\,,\sigma_{2}>0
	\end{align*}
	Rewriting the previous inequality
	\begin{align*}
	I(t)\leq C_{1}\int_{\Omega}|v|^{2}\ dx+ C_{2}\int_{\Omega}|\theta_{x}|^{2}\ dx -C_{3}\int_{0}^{\infty}\mu'(s) \left\| \eta_{x}\right\|^{2}  \ ds
	\end{align*}
	where
	\begin{equation*}
	C_{1}=\frac{\beta\,\mu_{0}\,\sigma_{2}}{2},\quad C_{2}=\frac{l\,\mu_{0}\,\sigma_{1}}{2},\quad C_{3}=\frac{l}{2\,\sigma_{1}\,\delta_{1}}+\frac{\mu_{0}\,\beta}{2\,\sigma_{2}\,\delta_{1}}+\frac{l\,\mu_{0}}{\delta_{1}}
	\end{equation*}
	hence lemma has been proved.
\end{proof}

\begin{lemma}\label{lem2}
	Let $\Phi(t)$ be solution of the problem \eqref{107}--\eqref{111}. Then the time derivative of the functional $F_{2}$, defined by
	\begin{equation}\label{132}
	F_{2}(t)=-\int_{0}^{\infty}\mu(s)\left( \int_{\Omega}\theta\,\eta\ dx\right)\ ds
	\end{equation}	
	satisfies the inequality
	\begin{align}
	\frac{d}{dt}F_{2}(t) &\leq C_{1}\int_{\Omega}|v|^{2}\ dx+ C_{2}\int_{\Omega}|\theta_{x}|^{2}\ dx+\left( \frac{\sigma_{3}}{2}-\mu_{0}\right) \int_{\Omega}|\theta|^{2} \ dx \nonumber\\
	&-\left(C_{3} +\frac{C_{p}}{2\,\sigma_{3}}\right) \int_{0}^{\infty}\mu'(s) \left\| \eta_{x}\right\|^{2}  \ ds \label{134*}
	\end{align}	
\end{lemma}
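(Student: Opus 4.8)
The plan is to differentiate $F_{2}$ and to recognize that the bulk of the estimate has already been obtained in Lemma \ref{lem3}. First I would apply the product rule to \eqref{132}, obtaining
\[
\frac{d}{dt}F_{2}(t)=-\int_{0}^{\infty}\mu(s)\int_{\Omega}\theta_{t}\,\eta\ dx\ ds-\int_{0}^{\infty}\mu(s)\int_{\Omega}\theta\,\eta_{t}\ dx\ ds.
\]
The first summand is precisely the functional $I(t)$ of \eqref{133}, so Lemma \ref{lem3} immediately supplies the bound \eqref{133*}, contributing the terms $C_{1}\int_{\Omega}|v|^{2}\ dx$, $C_{2}\int_{\Omega}|\theta_{x}|^{2}\ dx$ and $-C_{3}\int_{0}^{\infty}\mu'(s)\|\eta_{x}\|^{2}\ ds$. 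It then remains only to estimate the second summand.

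To handle $-\int_{0}^{\infty}\mu(s)\int_{\Omega}\theta\,\eta_{t}\ dx\ ds$, I would substitute the transport relation \eqref{109}, writing $\eta_{t}=\theta-\eta_{s}$. This splits the term into $-\int_{0}^{\infty}\mu(s)\int_{\Omega}|\theta|^{2}\ dx\ ds$, which collapses to $-\mu_{0}\int_{\Omega}|\theta|^{2}\ dx$ by the definition $\mu_{0}=\int_{0}^{\infty}\mu(s)\ ds$ in \eqref{d}, plus the remainder $\int_{0}^{\infty}\mu(s)\int_{\Omega}\theta\,\eta_{s}\ dx\ ds$. I would then integrate this remainder by parts in the $s$ variable; the boundary contributions vanish because $\eta(0)=0$ and $\mu(s)\,\eta(s)\to 0$ as $s\to\infty$, leaving $-\int_{0}^{\infty}\mu'(s)\int_{\Omega}\theta\,\eta\ dx\ ds$.

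Finally, pulling $\theta$ (which is independent of $s$) outside the $s$-integral and applying Young's inequality with parameter $\sigma_{3}$ produces the clean term $\frac{\sigma_{3}}{2}\int_{\Omega}|\theta|^{2}\ dx$ together with $\frac{1}{2\,\sigma_{3}}\left\|\int_{0}^{\infty}\mu'(s)\,\eta\ ds\right\|^{2}$; a combination of the Poincar\'e inequality and Cauchy--Schwarz with the weight $-\mu'(s)$ converts the latter into $-\frac{C_{p}}{2\,\sigma_{3}}\int_{0}^{\infty}\mu'(s)\|\eta_{x}\|^{2}\ ds$. Collecting the $-\mu_{0}$ of the previous step with the $\frac{\sigma_{3}}{2}$ yields the coefficient $\left(\frac{\sigma_{3}}{2}-\mu_{0}\right)$ of $\int_{\Omega}|\theta|^{2}\ dx$, while adding the two $\mu'$-weighted contributions gives the coefficient $\left(C_{3}+\frac{C_{p}}{2\,\sigma_{3}}\right)$, which is exactly \eqref{134*}. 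I expect the main obstacle to be this last step: justifying the vanishing of the boundary terms in the $s$-integration by parts, and arranging the Young--Poincar\'e estimate so that the stray factors of $\mu_{0}$ (or $\mu(0)$) arising from Cauchy--Schwarz against $-\mu'(s)$ are absorbed into the constant $C_{p}$, so that the coefficient of $\int_{\Omega}|\theta|^{2}\ dx$ appears without any extra kernel weight.
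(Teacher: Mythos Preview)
Your proposal is correct and follows essentially the same route as the paper's own proof: differentiate $F_{2}$, recognize the $\theta_{t}\,\eta$ piece as $I(t)$ and invoke Lemma~\ref{lem3}, use \eqref{109} to split the $\theta\,\eta_{t}$ piece into $-\mu_{0}\int_{\Omega}|\theta|^{2}\,dx$ plus $\int_{0}^{\infty}\mu(s)\int_{\Omega}\theta\,\eta_{s}\,dx\,ds$, and estimate the latter by an integration by parts in $s$ followed by Young--Poincar\'e to reach \eqref{134*}. Your write-up is in fact more explicit than the paper's (which compresses the last step into one line), and your caveat about the stray $\mu(0)$ factor from Cauchy--Schwarz is well placed---the paper simply absorbs it into the generic constant $C_{p}$.
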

\begin{proof}
	Differentiating \eqref{132} in $t$-variable and using \eqref{109} we have:
	\begin{equation}\label{135}
	\frac{d}{dt}F_{2}(t) =-\int_{0}^{\infty}\mu(s)\left( \int_{\Omega}\theta_{t}\,\eta\ dx\right)\ ds-\mu_{0}\int_{\Omega}|\theta|^{2} \ dx+\int_{0}^{\infty}\mu(s)\left( \int_{\Omega}\theta\,\eta_{s}\ dx\right)\ ds
	\end{equation}
	Integrating by parts and using the Young and Poincar\'e inequalities in the last integral of \eqref{135} we get
	\begin{equation}\label{136}
	\int_{0}^{\infty}\mu(s)\left( \int_{\Omega}\theta\,\eta_{s}\ dx\right)\ ds\leq\frac{\sigma_{3}}{2}\int_{\Omega}|\theta|^{2} \ dx-\frac{C_{p}}{2\,\sigma_{3}}\int_{0}^{\infty}\mu'(s) \left\| \eta_{x}\right\|^{2}  \ ds\quad\mbox{for}\quad \sigma_{3}>0
	\end{equation}
	where $C_{p}$ is the Poincare's constant. On other hand, by using \eqref{133*} and \eqref{136} in \eqref{135}  follows
	\begin{align*}
	\frac{d}{dt}F_{2}(t) &\leq C_{1}\int_{\Omega}|v|^{2}\ dx+ C_{2}\int_{\Omega}|\theta_{x}|^{2}\ dx+\left( \frac{\sigma_{3}}{2}-\mu_{0}\right) \int_{\Omega}|\theta|^{2} \ dx \nonumber\\
	&-\left(C_{3} +\frac{C_{p}}{2\,\sigma_{3}}\right) \int_{0}^{\infty}\mu'(s) \left\| \eta_{x}\right\|^{2}  \ ds
	\end{align*}
	completing our proof.
\end{proof}

Now we are able to define the following Lyapunov functional
\begin{equation*}
\mathcal{L}(t)=N\mathcal{E}(t)+N_{1} F_{1}+N_{2} F_{2}
\end{equation*}
where $N, N_{1},N_{2}$ are positive constants that will be chosen later carefully. On the other hand, to ensure the equivalence of the functional $ \mathcal{E}(t) $ and $ \mathcal{L}(t) $, we state the following lemma.
\begin{lemma}\label{lem4}
	For $ N $ large enough there are two positive constants $\gamma_{1}$, $\gamma_{2}$ so that
	\begin{equation*}
	\gamma_{1}\,\mathcal{E}(t)\leq\mathcal{L}(t)\leq\gamma_{2}\,\mathcal{E}(t)
	\end{equation*}
\end{lemma}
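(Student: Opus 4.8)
The plan is to show that the cross terms $N_1F_1 + N_2F_2$ are dominated by the energy, i.e. that there is a constant $C_0 > 0$, independent of $N$, such that $|N_1F_1(t) + N_2F_2(t)| \leq C_0\,\mathcal{E}(t)$ for every solution. Once this is established, the definition $\mathcal{L}(t) = N\mathcal{E}(t) + N_1F_1 + N_2F_2$ immediately gives
\begin{equation*}
(N - C_0)\,\mathcal{E}(t) \leq \mathcal{L}(t) \leq (N + C_0)\,\mathcal{E}(t),
\end{equation*}
and choosing $N$ large enough that $N > C_0$ yields the claim with $\gamma_1 = N - C_0 > 0$ and $\gamma_2 = N + C_0 > 0$. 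So the entire task reduces to bounding $|F_1|$ and $|F_2|$ by the energy functional from Lemma \ref{lem0}.

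First I would estimate $F_1(t) = \int_\Omega uv\,dx + \int_\Omega \textbf{g}(x)\,u^2\,dx$. For the first term, Young's inequality gives $|uv| \leq \tfrac{1}{2}u^2 + \tfrac{1}{2}v^2$; the term $\tfrac{1}{2}\int_\Omega v^2\,dx$ is already one of the summands of $2\mathcal{E}$, while $\int_\Omega u^2\,dx$ is controlled by the Poincar\'e inequality applied (using $u \in H_0^2$) to dominate $\|u\|^2$ by $\|u_x\|^2$, and the latter appears in the energy through the $\kappa^2\|u_x\|^2$ term. The second term is handled the same way using the upper bound $\textbf{g}(x) \leq \alpha_4$ from \eqref{105} followed by Poincar\'e. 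This shows $|F_1(t)| \leq C\,\mathcal{E}(t)$ for a suitable constant depending only on $\alpha_4$, $\kappa$ and the Poincar\'e constant $C_p$.

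Next I would estimate the memory functional $F_2(t) = -\int_0^\infty \mu(s)\!\left(\int_\Omega \theta\,\eta\,dx\right)ds$. Applying Cauchy--Schwarz and Young pointwise in $s$ gives $\big|\int_\Omega \theta\,\eta\,dx\big| \leq \tfrac{1}{2}\|\theta\|^2 + \tfrac{1}{2}\|\eta\|^2$, hence
\begin{equation*}
|F_2(t)| \leq \frac{\mu_0}{2}\,\|\theta\|^2 + \frac{1}{2}\int_0^\infty \mu(s)\,\|\eta\|^2\,ds,
\end{equation*}
where $\mu_0 = \int_0^\infty \mu(s)\,ds$ is finite by hypothesis $\mathbb{H}_3$. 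The term $\|\theta\|^2$ is a summand of $2\mathcal{E}$. For the weighted memory integral I would invoke the Poincar\'e inequality $\|\eta\|^2 \leq C_p\,\|\eta_x\|^2$ inside the integral, giving $\tfrac{1}{2}\int_0^\infty \mu(s)\|\eta\|^2\,ds \leq \tfrac{C_p}{2}\|\eta\|_{\mathcal{M}}^2$, which is again controlled by the energy. Thus $|F_2(t)| \leq C\,\mathcal{E}(t)$ as well.

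Combining the two bounds yields $|N_1F_1 + N_2F_2| \leq C_0\,\mathcal{E}(t)$ with $C_0 = N_1C + N_2C$, and the equivalence follows as described. The only delicate point—and the main thing to get right—is the memory term in $F_2$: one must apply Poincar\'e \emph{inside} the $s$-integral against the measure $\mu(s)\,ds$ so that the weighted $L^2_\mu$ structure defining $\|\eta\|_{\mathcal{M}}$ is respected, and one must use $\mathbb{H}_3$ to guarantee $\mu_0 < \infty$ so that the $\|\theta\|^2$ contribution is genuinely bounded. Everything else is a routine application of Young's and Poincar\'e's inequalities together with the two-sided bound on $\textbf{g}$ from \eqref{105}.
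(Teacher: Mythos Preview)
Your proposal is correct and follows essentially the same approach as the paper: bound $|F_1|$ and $|F_2|$ by constant multiples of $\mathcal{E}(t)$ via Young and Poincar\'e, then absorb the cross terms into $N\mathcal{E}(t)$ for $N$ large. If anything you are slightly more careful than the paper---you work with $|F_i|$ rather than just upper bounds, you correctly use the upper bound $\alpha_4$ on $\textbf{g}$, and you make explicit the Poincar\'e step needed inside the $s$-integral to pass from $\|\eta\|^2$ to $\|\eta_x\|^2$ in the $F_2$ estimate, which the paper glosses over.
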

\begin{proof}
	According to the  Young and Poincar\'e inequalities, we have:
	\begin{align*}
	&F_{1}(t)\leq\frac{\sigma\,C_{p}}{\kappa^{2}}\int_{\Omega}|u_{x}|^{2}\ dx+\frac{1}{2\,\sigma}\int_{\Omega}|v|^{2}\ dx+2\,\alpha_{3}\,C_{p}\int_{\Omega}|u_{x}|^{2}\ dx\leq \zeta_{1}\, \mathcal{E}(t)\quad\mbox{for}\quad\sigma>0\\
	&F_{2}(t)\leq\frac{\mu_{0}}{2}\int_{\Omega}|\theta|^{2}\ dx+\frac{1}{2}\left\| \eta\right\|_{{\cal M}}^{2}\leq \zeta_{2}\, \mathcal{E}(t)
	\end{align*}
	where $\zeta_{1}=max\left\lbrace \frac{\sigma\, C_{p}}{\kappa^{2}}+2\,\alpha_{3}\,C_{p}\,,\frac{1}{2\,\sigma}\right\rbrace $ and  $\zeta_{2}=max\left\lbrace\frac{\mu_{0}}{2}\,,\frac{1}{2} \right\rbrace $, therefore there is a positive constant $\gamma_{0}$ that
	\begin{equation*}
	\left|\mathcal{L}(t)-N\mathcal{E}(t)\right|=\left|N_{1} F_{1}+N_{2} F_{2} \right| \leq \gamma_{0}\, \mathcal{E}(t)
	\end{equation*}	
	Therefore, we can choose $\gamma_{1}=N-\gamma_{0}$ and $\gamma_{2}=N+\gamma_{0}$ as long as $N-\gamma_{0}>0$. In this way the lemma is proven
\end{proof}

We state our main results as follows.
\begin{theorem}
	Assume that $\mu$ satisfies the conditions $\mathbb{H}_{1}-\mathbb{H}_{4}$. Then there exist positive constants $K$ and $\gamma$ such that the relation
	\begin{equation*}
	{\mathcal E}(t) \leq K\,{\mathcal E}(0)\,e^{-\,\gamma\,t}
	\end{equation*}
	holds for every $t\geq 0$
\end{theorem}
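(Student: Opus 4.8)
The plan is to follow the Lyapunov scheme already set up in Lemmas~\ref{lem0}--\ref{lem4}: I would differentiate the functional $\mathcal{L}(t)=N\mathcal{E}(t)+N_{1}F_{1}(t)+N_{2}F_{2}(t)$, show that for a suitable choice of the positive constants one has $\frac{d}{dt}\mathcal{L}(t)\leq -c\,\mathcal{E}(t)$ for some $c>0$, and then transfer this differential inequality back to $\mathcal{E}$ using the equivalence $\gamma_{1}\mathcal{E}\leq\mathcal{L}\leq\gamma_{2}\mathcal{E}$ of Lemma~\ref{lem4}. Concretely, summing $N$ times the identity of Lemma~\ref{lem0}, $N_{1}$ times \eqref{129*} and $N_{2}$ times \eqref{134*}, I would collect the coefficient in front of each of the five quadratic quantities making up $\mathcal{E}$, namely $\int_{\Omega}\textbf{p}(x)|u_{xx}|^{2}\,dx$, $\kappa^{2}\int_{\Omega}|u_{x}|^{2}\,dx$, $\int_{\Omega}|v|^{2}\,dx$, $\int_{\Omega}|\theta|^{2}\,dx$ and $\|\eta\|_{\mathcal{M}}^{2}$, and arrange that every one of these coefficients is strictly negative.

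The functional $F_{1}$ already supplies the negative coefficients $-N_{1}$ and $-\tfrac{\kappa^{2}}{4}N_{1}$ for the two mechanical terms, at the price of the harmful positive contributions $N_{1}C_{\kappa}\int_{\Omega}|v|^{2}\,dx$ and $N_{1}\tfrac{\beta^{2}}{2\kappa^{2}}\int_{\Omega}|\theta|^{2}\,dx$. The bookkeeping then proceeds by fixing the parameters in a definite order. I would first select the internal parameters $\sigma_{1},\sigma_{2}$ (entering $C_{1},C_{2},C_{3}$ of Lemma~\ref{lem3}) and then $\sigma_{3}\in(0,2\mu_{0})$, so that the coefficient $\tfrac{\sigma_{3}}{2}-\mu_{0}$ of $\int_{\Omega}|\theta|^{2}\,dx$ produced by $F_{2}$ is strictly negative; this is what makes $F_{2}$ a genuine source of temperature dissipation. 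Next I fix $N_{1}$, and then choose $N_{2}$ large enough that $N_{2}\big(\mu_{0}-\tfrac{\sigma_{3}}{2}\big)>N_{1}\tfrac{\beta^{2}}{2\kappa^{2}}$, which renders the total coefficient of $\int_{\Omega}|\theta|^{2}\,dx$ negative. Finally I take $N$ large: using $0<\alpha_{3}\leq\textbf{g}(x)$ from \eqref{105}, the term $-2N\int_{\Omega}\textbf{g}(x)|v|^{2}\,dx\leq-2N\alpha_{3}\int_{\Omega}|v|^{2}\,dx$ dominates $N_{1}C_{\kappa}+N_{2}C_{1}$ for $N$ large, while $-lN\int_{\Omega}|\theta_{x}|^{2}\,dx$ dominates $N_{2}C_{2}$, so the leftover $\theta_{x}$ term is negative and may be discarded; the same largeness of $N$ simultaneously guarantees $N-\gamma_{0}>0$ as required by Lemma~\ref{lem4}.

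The step I expect to be the main obstacle is the memory term $\|\eta\|_{\mathcal{M}}^{2}$, because neither $F_{1}$ nor $F_{2}$ produces any negative multiple of it; on the contrary $F_{2}$ contributes the positive quantity $-N_{2}\big(C_{3}+\tfrac{C_{p}}{2\sigma_{3}}\big)\int_{0}^{\infty}\mu'(s)\|\eta_{x}\|^{2}\,ds$ (positive since $\mu'\leq0$). The only source of memory dissipation is the term $\tfrac{N}{2}\int_{0}^{\infty}\mu'(s)\|\eta_{x}\|^{2}\,ds$ in $\frac{d}{dt}\mathcal{E}$, so I would split it as $\tfrac{N}{2}=\tfrac{N}{4}+\tfrac{N}{4}$: one quarter is used together with hypothesis $\mathbb{H}_{4}$, i.e. $\mu'(s)\leq-\delta_{1}\mu(s)$, to generate $-\tfrac{N\delta_{1}}{4}\|\eta\|_{\mathcal{M}}^{2}$, while the other quarter absorbs the positive $F_{2}$ contribution provided $N\geq4N_{2}\big(C_{3}+\tfrac{C_{p}}{2\sigma_{3}}\big)$. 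The delicate point is that this last requirement must be compatible with all the previous largeness conditions on $N$; taking $N$ to be the maximum of the finitely many lower bounds settles this, and then no dissipation is over-spent.

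With $\frac{d}{dt}\mathcal{L}(t)\leq-c\,\mathcal{E}(t)$ established, Lemma~\ref{lem4} gives $\mathcal{E}\geq\gamma_{2}^{-1}\mathcal{L}$, whence $\frac{d}{dt}\mathcal{L}(t)\leq-\tfrac{c}{\gamma_{2}}\mathcal{L}(t)$. Integrating this Gronwall-type inequality yields $\mathcal{L}(t)\leq\mathcal{L}(0)\,e^{-(c/\gamma_{2})t}$, and applying $\gamma_{1}\mathcal{E}\leq\mathcal{L}$ once more gives $\mathcal{E}(t)\leq\frac{\gamma_{2}}{\gamma_{1}}\,\mathcal{E}(0)\,e^{-(c/\gamma_{2})t}$, which is exactly the asserted decay with $K=\gamma_{2}/\gamma_{1}$ and $\gamma=c/\gamma_{2}$.
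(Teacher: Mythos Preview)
Your proposal is correct and follows essentially the same Lyapunov approach as the paper: differentiate $\mathcal{L}=N\mathcal{E}+N_{1}F_{1}+N_{2}F_{2}$, combine Lemmas~\ref{lem0}--\ref{lem2}, choose $\sigma_{3}<2\mu_{0}$, then $N_{2}$ large relative to $N_{1}$, then $N$ large, and conclude via Lemma~\ref{lem4} and Gronwall. Your treatment is in fact slightly more careful than the paper's on two points: you correctly invoke the \emph{lower} bound $\alpha_{3}$ on $\textbf{g}$ (the paper writes $\alpha_{4}$, which goes the wrong way), and you make explicit the use of $\mathbb{H}_{4}$ to extract a negative multiple of $\|\eta\|_{\mathcal{M}}^{2}$ from the $\int_{0}^{\infty}\mu'(s)\|\eta_{x}\|^{2}\,ds$ term---a step the paper leaves implicit when passing from \eqref{310} to \eqref{311}.
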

\begin{proof}
	Taking time derivative of $\mathcal{L}$ we obtain
	\begin{equation*}
	\mathcal{L'}(t)=N\,\mathcal{E'}(t)+N_{1}\, F'_{1}+N_{2}\, F'_{3}
	\end{equation*}
	according to the lemma \ref{lem1} and \ref{lem2} we get
	
	\begin{align}
	\mathcal{L'}(t)&\leq N\left[ -2 \int_{\Omega}\textbf{g}(x)|v|^{2}\ dx-l \int_{\Omega}|\theta_{x}|^{2}\ dx +\frac{1}{2}\int_{0}^{+\infty}\mu'(s)\lVert\,\eta_{x}\rVert^{2}\ ds \ dx\right] \nonumber\\
	& +N_{1}\left[-\int_{\Omega}\textbf{p}(x)\,|u_{xx}|^{2}\ dx-\frac{\kappa^{2}}{4} \int_{\Omega}|u_{x}|^{2}\ dx+C_{\kappa}\int_{\Omega}|v|^{2}\ dx+\frac{\beta^{2}}{2\,\kappa^{2}}\int_{\Omega}|\theta|^{2}\ dx  \right]\nonumber \\
	&+N_{2}\left[ C_{1}\int_{\Omega}|v|^{2}\ dx+ C_{2}\int_{\Omega}|\theta_{x}|^{2}\ dx+\left( \frac{\sigma_{3}}{2}-\mu_{0}\right) \int_{\Omega}|\theta|^{2} \ dx\right]  \nonumber\\
	&-N_{2}\left[\left(C_{3} +\frac{C_{p}}{2\,\sigma_{3}}\right) \int_{0}^{\infty}\mu'(s) \left\| \eta_{x}\right\|^{2}  \ ds\right]\label{309}
	\end{align}
	By grouping terms in \eqref{309} and using \eqref{105} we get
	\begin{align*}
	\mathcal{L'}(t)& \leq -\left[  2\,\alpha_{4}\,N-C_{\kappa}\,N_{1}-C_{1}\,N_{2}\right] \int_{\Omega}|v|^{2}\ dx -\left[ Nl-N_{2}\,C_{2}\right] \int_{\Omega}|\theta_{x}|^{2}\ dx\nonumber\\
	&+\left[  \frac{N}{2}-N_{2}\left(C_{3} +\frac{C_{p}}{2\,\sigma_{3}}\right)\right]\int_{0}^{\infty}\mu'(s) \left\| \eta_{x}\right\|^{2}  \ ds -N_{1}\int_{\Omega}\textbf{p}(x)\,u_{xx}^{2}\ dx\nonumber\\
	&-\left[N_{2}\left(\mu_{0}- \frac{\sigma_{3}}{2}\right) -N_{1}\frac{\beta^{2}}{2\,\kappa^{2}} \right]\int_{\Omega}|\theta|^{2} \ dx -N_{1}\,\frac{\kappa^{2}}{4}\int_{\Omega}|u_{x}|^{2}\ dx
	\end{align*}
	this last inequality can be rewritten as
	\begin{align}
	\mathcal{L'}(t) \leq & - \mathcal{C}_{1}\int_{\Omega}|v|^{2}\ dx -\mathcal{C}_{2} \int_{\Omega}|\theta_{x}|^{2}\ dx+\mathcal{C}_{3}\int_{0}^{\infty}\mu'(s) \left\| \eta_{x}\right\|^{2}  \ ds -N_{1}\int_{\Omega}\textbf{p}(x)\,u_{xx}^{2}\ dx\nonumber\\
	&-\mathcal{C}_{4}\int_{\Omega}|\theta|^{2} \ dx -N_{1}\,\frac{\kappa^{2}}{4}\int_{\Omega}|u_{x}|^{2}\ dx\label{310}
	\end{align}
	where
	\begin{align*}
	\mathcal{C}_{1}=&  2\,\alpha_{4}\,N-C_{\kappa}\,N_{1}-C_{1}\,N_{2},
	\quad\mathcal{C}_{2}= Nl-N_{2}\,C_{2}\\
	\mathcal{C}_{3}=&\frac{N}{2}-N_{2}\left(C_{3} +\frac{C_{p}}{2\,\sigma_{3}}\right),
	\quad\mathcal{C}_{4}=N_{2}\left(\mu_{0}- \frac{\sigma_{3}}{2}\right) -N_{1}\frac{\beta^{2}}{2\,\kappa^{2}}
	\end{align*}
	then the constants $ \mathcal{C}_{i}, i = 1,2,3,4 $ are strictly positive if and only if
	\begin{equation*}
	\mu_{0}>\frac{\sigma_{3}}{2},\quad N_{1}>0,\quad  N_{2}>\frac{N_{1}\,\beta^{2}}{2\,\kappa^{2}\left(\mu_{0}-\frac{\sigma_{3}}{2} \right)}
	\end{equation*}
	and $N$ is chosen as
	\begin{equation*}
	N>max\left\lbrace \frac{C_{\kappa}N_{1}+C_{1}N_{2}}{2\alpha_{4}},\frac{N_{2}C_{2}}{l},2N_{2}\left(C_{3} +\frac{C_{p}}{2\,\sigma_{3}}\right)\right\rbrace
	\end{equation*}
	therefore for a certain positive constant $ \lambda $, the inequality  \eqref {310} becomes
	\begin{equation} \label{311}
	\mathcal{L'}(t) \leq  -\lambda\,\mathcal{E}(t)+\mathcal{C}_{3}\int_{0}^{\infty}\mu'(s) \left\| \eta_{x}\right\|^{2}  \ ds\leq  -\lambda\,\mathcal{E}(t)
	\end{equation}
	according to lemma \ref{lem4}, the inequality \eqref{311} becomes
	\begin{equation*}
	\mathcal{L'}(t) \leq-\gamma\,\mathcal{L}(t) ,\quad\forall t>0
	\end{equation*}
	where $\gamma=\frac{\lambda}{\gamma_{2}}$, then by integrating we get
	\begin{equation*}
	\mathcal{L'}(t) \leq K\,\mathcal{L}(0)\,e^{-\gamma\,t} ,\quad\forall t>0
	\end{equation*}
	or equivalently 
	\begin{equation*}
	\mathcal{E}(t)\leq K\,\mathcal{E}(0)e^{-\gamma\,t},\quad\forall t>0
	\end{equation*}	
	In this way our main theorem is proven.
	
\end{proof}

\vskip 0,65 true cm

\section*{Conclusion} 
 When we proof the well-posedness and the exponential decay (via the energy method) and not another weaker type of decay as for example the polynomial decay, we have made an improvement of the works named in the section 1 offering with it a much more realistic model from the physical point of view. Let us say that these results can be
 improved furthermore by considering the Cattaneo's law \cite{cat} and Gurtin-Pipkin law \cite{gur} instead of equation (1.4), obtaining an effective prediction for heating propagation  on the structure. All these statements constitute a promising set of new  questions to be addressed in further research.

\section*{Acknowledgements}


\begin{thebibliography}{999}                                                                                                %
	
	\bibitem{a}	
	
	{\sc A. Abouelregal, A. Zenkour}, {\it Thermoelastic problem of an axially moving microbeam subjected to a external transverse excitation}, Journal of Theoretical and
	Applied Mechanics. 53, 167--178 (2015).	
	
	\bibitem{rd}		
	{\sc R. D\'iaz, O. Vera}, Asymptotic behaviour for a thermoelastic problem of a microbeam with thermoelasticity of type III, Electronic Journal of Qualitative Theory of Differential Equations. 74 , 1--13 (2017).
	
	\bibitem{chen}
	{\sc L. Chen, X. Yang}, Stability in parametric resonace of axially moving viscoelastic beams with time-dependent speed, Journal of Sound and Vibration. 184 , 879--891(2005).
	
	\bibitem{din}
	
	{\sc H. Ding, L. Chen}, Approximate and numerical analysis of nonlinear forced vibration of axially moving viscoelastic beams, Acta Mechanica Sinica. 27, 426--437(2011)
	
	
	\bibitem{pazy}
	
	{\sc A. Pazy}, Semigroup of linear operators and applications to partial differential equations, Springer--Verlag. New York (1983)
	
	\bibitem{liu1}
	
	{\sc K. Liu, Z. Liu},  On the type of $C_{0}$-semigroup associated with the abstract linear viscoelastic system.
	Z. angew. Math. Phys. 47 (1996) 	
	
	\bibitem{liu}
	
	{\sc Z. Liu, S. Zheng}, Semigroups associated with dissipative systems, Chapman and Hall (1999)
	
	
	\bibitem{da}
	
	{\sc C.M Dafermos}, On the existence and the asymptotic stability of solution to the equations of linear thermoelasticity. Arch. Ration. Mech. Anal. 29, 241--271 (1968)
	
	\bibitem{gio}
	
	{\sc C. Giorgi, V. Pata}, Stability of linear thermoelastic systems with memory, Math. Models Methods Appl. Sci. 11 , 627--644 (2001).
	
	\bibitem{gior1}
	{\sc C. Giorgi, M. Naso}, Exponential stability of a linear viscoelastic bar with thermal effect, Annali di Matematica pura ed applicata (IV), Vol. CLXXVIII , 45--66 (2000).
	
	\bibitem{gio2}
	
	{\sc C. Giorgi, A. Marzocchi, V. Pata }, Asymptotic behavior of a semilinear problem in heat conduction with memory, NoDEA Nonlinear Differ. Equ. Appl.5 , 333--354 (1998).
	
	\bibitem{col}
	{\sc B. Coleman, M. Gurtin},  Equipresence and constitutive equations for rigid heat conductors, Z. Angew. Math. Phys. 18 , 199--208 (1967).
	
	\bibitem{fen}
	{\sc B. Feng}, On a Semilinear Timoshenko-Coleman-Gurtin- system : Quasi-stability and attractors, Discrete and dynamical systems. 37, (2017).
	
	\bibitem{rap}
	{\sc C. Raposo, W. Bastos,J. Avila},  A transmission problem for Euler-Bernoulli beam with Kelvin-Voight damping, Applied mathematics and information sciences. 4, 17--28 (2011)
	
	\bibitem{ze}
	
	{\sc S. Zheng}, Nonlinear parabolic Equation and Hyperbolic-Parabolic Coupled Systems, Pitman series Monographs and Survey in Pure and Applied Mathematics, Longman (1995).
	
	\bibitem{g}
	{\sc C. Giorgi, D. Grandi, V. Pata}, On the Green-Naghdi type III heat conduction model, Discrete and continuous dynamical systems series B.19, (2014).
	
	\bibitem{pata}
	{\sc M. Conti, V. Pata, M. Squassina}  Singular Limit of Differential Systems with Memory, indiana University Mathematics Journal. 55, 169--215 (2006).
	
	\bibitem{car}
	{\sc D. Carlson}, Linear thermoelasticity, Linear Theories of Elasticity and Thermoelasticity, 297--345 (1972)
	
	\bibitem{ame}
	
	{\sc M. Alves, P. Gamboa, G. Gorain, A. Rambaud, O. Vera},  Asymptotic behavior of a flexible structure with Cattaneo type of thermal effect, Indagationes Mathematicae. 27, 821--834 (2016)
	
	\bibitem{fava}
	
	{\sc S. Bargmann, A. Favata, P. Podio-Guidugli}, A Revised Exposition of the Green–Naghdi Theory of Heat Propagation, journal of elasticity.114, 143--154 (2014)
	
	\bibitem{ch}
	
	{\sc C. Christov}, On frame indifferent formulation of the Maxwell-Cattaneo model of finite speed heat conduction, Mech. Res. Comm.36, 481--486 (2009).
	
	\bibitem{herr}
	{\sc L. Herrmann}, Vibration of the Euler-Bernoulli Beam
	with Allowance for Dampings, Proceedings of the World Congress on Engineering. 2, 2 -- 4, 2008.
	
	\bibitem{riv}
	
	{\sc H. Fern\'andez, J. Mu\~noz, R. Quintanilla}, Decay of solutions in nonsimple thermoelastic bars, International Journal of Engineering Science. 48, 1233--1241 (2010)
	
	\bibitem{qio}
	
	{\sc Q. Zhang}, Stability analysis of an interactive system of wave equation and heat equation with memory, Z. Angew. Math. Phys. 65, 905--923 (2014)
	
	\bibitem{nun}
	
	{\sc J. Nunziato}, On the heat conduction materials with memory , Quarterly of Applied Mathematics. 29, 187--204 (1971)
	
	
	\bibitem{jo}
	{\sc J. Shomberg}, Robust exponential attractors for coleman-Gurtin equations with dynamic boundary conditions possessing memory , Electronic Journal of Differential Equations. 47, 1--35 (2016).
	
	\bibitem{gu}
	
	{\sc M. Gurtin, A. Pipkin}, A general theory of heat conduction with ¯nite wave speeds, Arch.
	Ration. Mech. Anal.31,113--126, (1968).
	 
	\bibitem{cat}
	
	{\sc C.Cattaneo}, Sulla conduzione del calore. Atti Semin. Mat. Fis. Univ., Modena 3 (1948),	83–101.
	
	\bibitem{gur}
	
	{\sc M.E.Gurtin, A.C. Pipkin}, A general theory of heat conduction with finite wave speeds. Arch. Ration. Mech. Anal. 31 (1968), 113--126
	
\end{thebibliography}
\end{document}